\documentclass[12pt]{amsart}
\pagestyle{plain}
\usepackage[english]{babel}
\usepackage{graphicx}
\usepackage{amsfonts}
\usepackage{amsmath}
\usepackage{amssymb}
\usepackage{amscd}
\usepackage[all,cmtip,matrix, arrow]{xy}
\usepackage{amsthm}
\def\Aut {\operatorname{Aut}}

\newcommand{\PP}{\mathbb{P}}
\newcommand{\QQ}{\mathbb{Q}}
\newcommand{\Pic}{\operatorname{Pic}}
\newcommand{\Stab}{\operatorname{Stab}}

\newtheorem{lemma}{Lemma}[section]
\newtheorem{theorem}[lemma]{Theorem}
\newtheorem{propos}[lemma]{Proposition}

\newtheorem{corollary}[lemma]{Corollary}

\theoremstyle{definition}
\newtheorem{definition}[lemma]{Definition}

\newtheorem{remark}[lemma]{Remark}
\binoppenalty=900000
\relpenalty=900000
\oddsidemargin=0mm
\evensidemargin=0mm
\topmargin=0mm
\textwidth=17cm
\makeatletter
\def\blfootnote{\xdef\@thefnmark{}\@footnotetext}
\makeatother
\begin{document}

\title{Biregular and birational geometry of quartic double solids with 15 nodes}
\author[A.\,A.~Avilov]{A.\,A.~Avilov}
\address{National Research University Higher School of Economics, AG Laboratory, HSE, 6 Usacheva str., Moscow, Russia, 119048.}
\email{v07ulias@gmail.com} 

\maketitle

\begin{abstract}

Three-dimensional del Pezzo varieties of degree 2 are double covers of projective space $\mathbb{P}^{3}$ branced in a quadric. In this paper we prove that if a del Pezzo variety of degree 2 has exactly 15 nodes then the corresponding quadric is a hyperplane section of the Igusa quartic or, equivalently, all such del Pezzo varieties are members of one particular linear system on the Coble fourfold. Their automorphism groups are induced from the automorphism group of Coble fourfold. Also we classify all $G$-birationally rigid varieties of such type.

Bibliography: 11 titles.
\end{abstract}

\markright{Quartic double solids with 15 nodes}

\blfootnote{I was partially supported by the Russian Academic Excellence Project '5-100' and by ``Young Russian Mathematics'' award.}

\section{Introduction} In this article we work over the field of complex numbers.

Classification of rational $G$-Fano varieties is an important problem for classification of finite subgroups of Cremona groups (cf.~\cite{DI1}). Three-dimensinal $G$-del Pezzo varieties were partially classified by Yu. Prokhorov in~\cite{Pro1}. In the end of the article he posed the question: which $G$-del Pezzo threefolds are birationally (super)rigid? Cases of rational $G$-del Pezzo threefolds of degree 3 and 4 were considered by the author in papers~\cite{Avi1} and~\cite{Avi2}. In the case of degree 2 the situation is much more complicated --- it can be seen already from the classification of rational del Pezzo threefolds of degree 2 (see~\cite{CPS3}). Such varieties have at most 16 singularities (see, for example,~\cite{Pro1}). In the present paper we consider the case of del Pezzo threefolds of degree 2 with 15 ordinary double points. The main results of this article are the following theorems:
\begin{theorem}\label{th57} Every quartic surface with $15$ nodes is a hyperplane section of the Igusa quartic. Let $X$ be a del Pezzo threefold of degree 2 with precisely 15 ordinary double points. Then the variety $X$ is a member of a linear system $|\mathcal{L}|$ on the Coble fourfold (i.e. the cover of $\PP^{4}$ branched in the Igusa quartic), where $\mathcal{L}$ is a restriction of the line bundle $\mathcal{O}(1)$ on the weighted projective space $\PP(2, 1, 1, 1, 1, 1)$, and the automorphism group of $X$ coincides with the stabilizer of $X$ in the automorphism group of the Coble fourfold.
\end{theorem}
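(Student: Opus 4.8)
The plan is to prove the three assertions in order, using the first (about quartic surfaces) as the foundation for the other two. Throughout I write $I_4 \subset \PP^4$ for the Igusa quartic, denote by $\operatorname{Lin}(Z) \subset PGL_5$ (resp. $PGL_4$) the group of projective transformations preserving a subvariety $Z$, and lean on two classical facts: $I_4$ is singular exactly along a configuration of $15$ lines $\ell_1,\dots,\ell_{15}$, and $\operatorname{Lin}(I_4)=\Sym_6$ acts on these lines as $\Sym_6$ acts on the $15$ duads of $\{1,\dots,6\}$. A general hyperplane $H\subset\PP^4$ meets each $\ell_i$ in one point, so $H\cap I_4$ is a quartic surface with $15$ nodes; the content of the first statement is the converse.

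For the converse I would run a dimension-and-rigidity argument on the moduli side. The system $|\mathcal{O}_{\PP^3}(4)|$ has dimension $34$, imposing a node at a variable point is a codimension-one condition, so $15$ nodes cut out a family of expected dimension $34-15=19$; quotienting by $PGL_4$ (dimension $15$) leaves a $4$-dimensional space of $15$-nodal quartics up to projective equivalence. On the other side, hyperplane sections of the fixed $I_4$ are parametrized by the dual $\PP^4$, and since $\operatorname{Lin}(I_4)=\Sym_6$ is finite they too form a $4$-dimensional family. To upgrade this numerical coincidence into an identification I would pass to the minimal resolution $\widetilde S$ of a $15$-nodal quartic $S$, a K3 surface carrying a degree-$4$ polarization $H$ together with $15$ disjoint $(-2)$-curves orthogonal to $H$. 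The lattice generated by $H$ and the $15$ exceptional classes, together with its saturation in $\mathrm{NS}(\widetilde S)$, is extremely rigid; analyzing the even sets of nodes and the resulting overlattice pins down the N\'eron--Severi lattice and, via the Torelli theorem, shows that $15$-nodal quartics form a single irreducible $4$-dimensional family. Since the hyperplane sections of $I_4$ constitute a closed, irreducible, $4$-dimensional subfamily of the same space, the two coincide, giving the first statement. This lattice-theoretic rigidity of the $15$-nodal locus is the main obstacle of the whole theorem.

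Granting the first statement, the passage to threefolds is formal. Write $I_4=\{f_4=0\}$ and let $B=I_4\cap\{L=0\}$ present the branch quartic of $X$, so that $X=\{w^2=f_4|_{L=0}\}$ inside $\PP(2,1,1,1,1,1)$. The Coble fourfold is by definition $Y=\{w^2=f_4\}\subset\PP(2,1,1,1,1,1)$, and the vanishing of the weight-one coordinate $L$ is precisely a member of the system $|\mathcal{L}|$ cut out by $\mathcal{O}(1)$. Intersecting $Y$ with $\{L=0\}$ returns exactly the equation $w^2=f_4|_{L=0}$, so $X=Y\cap\{L=0\}$ is the desired member of $|\mathcal{L}|$.

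Finally, for the automorphism statement I would exploit the canonical double-cover structure on both sides. Since the morphism $X\to\PP^3$ is given by the complete linear system $|H|$ with $-K_X=2H$, the Galois involution $\sigma$ is intrinsic and central in $\Aut(X)$, yielding an exact sequence $1\to\langle\sigma\rangle\to\Aut(X)\to\operatorname{Lin}(B)\to 1$; the analogous sequence for $Y$ has quotient $\operatorname{Lin}(I_4)=\Sym_6$, and the deck involution $\tau$ of $Y$ restricts to $\sigma$ on $X$. The inclusion $\Stab_{\Aut(Y)}(X)\subseteq\Aut(X)$ is immediate by restriction, and it corresponds to $\Stab_{\Sym_6}(\{L=0\})\subseteq\operatorname{Lin}(B)$. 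For the reverse inclusion I would show that every $g\in\operatorname{Lin}(B)$ extends to an element of $\Sym_6$ fixing $\{L=0\}$: such a $g$ permutes the $15$ nodes of $B$, which are the points $\{L=0\}\cap\ell_i$, and the incidence combinatorics of this $15$-point configuration reproduces the $\Sym_6$-structure on duads, so the induced permutation of the nodes lies in the image of $\Sym_6$. The corresponding element of $\Sym_6=\operatorname{Lin}(I_4)$ then both extends $g$ and preserves the hyperplane $\{L=0\}$. This gives $\operatorname{Lin}(B)=\Stab_{\Sym_6}(\{L=0\})$, and lifting through the two central extensions yields $\Aut(X)=\Stab_{\Aut(Y)}(X)$, completing the proof.
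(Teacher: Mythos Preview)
Your overall architecture differs from the paper's in an important way, and the automorphism argument has a real gap.

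\textbf{First assertion.} The paper does \emph{not} argue via moduli of K3 surfaces. Instead it reverses your order of implication: it first proves the statement about del Pezzo threefolds, and only then deduces the surface statement as a corollary. Concretely, Prokhorov's description realises any $15$-nodal quartic double solid $X$ as a small contraction of the blow-up of four general points on a smooth $(1,1)$-divisor $H\subset\PP^2\times\PP^2$; Cheltsov--Kuznetsov--Shramov realise the Coble fourfold $Z$ in exactly the same way from $\PP^2\times\PP^2$ itself. Fitting the two diagrams together embeds $X$ into $Z$ as a member of $|\mathcal L|$, and the branch surface of $X$ is then automatically a hyperplane section of $I_4$. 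No dimension count, no lattice theory, no Torelli. Your route could in principle work, but the step ``the $15$-nodal quartic locus is irreducible of dimension $4$'' is exactly the hard content, and your sketch (even sets of nodes, overlattice, Torelli) is not a proof; one must genuinely pin down the saturation and show the period domain is irreducible. The paper bypasses all of this.

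\textbf{Automorphism assertion.} Here your outline matches the paper's up to the point where you produce, from $g\in\operatorname{Lin}(B)$, an element $\sigma\in\Sym_6$ via the permutation of the $15$ nodes. Two problems remain. First, to see that this permutation actually lies in $\Sym_6$ (and not in some larger permutation group of $15$ letters) you need more than the bare $15$ points: the paper uses the full $(15_4,10_6)$-configuration of nodes \emph{and} the ten planes cutting $B$ in double conics, and proves separately that the automorphism group of this abstract configuration is exactly $\Sym_6$. Second, and more seriously, your sentence ``the corresponding element of $\Sym_6$ then both extends $g$ and preserves the hyperplane $\{L=0\}$'' is precisely the claim that needs proof. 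A priori $\sigma$ sends $\{L=0\}$ to some other hyperplane $H'$ carrying the same node combinatorics. The paper rules this out by a geometric argument: for some special hyperplane $H_\beta$ one has $H\cap H_\beta\neq H'\cap H_\beta$, yet the induced isomorphism between these two conic sections of the smooth quadric $H_\beta\cap I_4$ would extend to an automorphism of the quadric fixing all six lines $\ell_\alpha\subset H_\beta$, forcing it to be trivial, a contradiction. Without this (or an equivalent) step, your argument does not close.
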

\begin{theorem}\label{th2} Let $X$ be a del Pezzo threefold of degree 2 with 15 ordinary double points. Let~$G\subset \Aut(X)$ be a subgroup such that the variety $X$ is $G\QQ$-factorial and $\operatorname{rk}\Pic(X)^{G}=1$. Then the variety $X$ is $G$-birationally rigid only in the following situation: it can be given by the equation $$y^{2}-4\sum\limits_{i=1}^{5}x_{i}^{4}+(\sum\limits_{i=1}^{5}x_{i}^{2})^{2}=\sum\limits_{i=1}^{5}x_{i}=0$$ in $\PP(2, 1, 1, 1, 1, 1)$ and $G$ is isomorphic to $S_{5}\times C_{2}$, $A_{5}\times C_{2}$ or $S_{5}$ (non-standard subgroup). Moreover, in this case $X$ is $G$-birationally superrigid.
\end{theorem}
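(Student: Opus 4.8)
The plan is to combine the structural description furnished by Theorem~\ref{th57} with the $G$-equivariant Noether--Fano/Sarkisov machinery. By Theorem~\ref{th57} every such $X$ is a hyperplane section of the Coble fourfold and $\Aut(X)$ is the stabilizer of $X$ in the automorphism group of the Coble fourfold, which is $S_6\times C_2$ (the $S_6$ acting through the Igusa quartic and the $C_2$ being the double-cover deck involution $y\mapsto -y$, i.e.\ the Geiser involution, which here is \emph{biregular}). Thus classifying the admissible pairs $(X,G)$ reduces to classifying the $S_6$-orbits of hyperplanes in $\PP^{4}$ together with their stabilizers, and then selecting those $G$ for which $X$ is $G\QQ$-factorial and $\operatorname{rk}\Pic(X)^{G}=1$. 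Accordingly, the first step is to compute, for each candidate stabilizer, the $G$-invariant part of the Picard group of a $G$-equivariant small resolution: the $15$ nodes carry classes of exceptional curves permuted by $G$ through the $\binom{6}{2}$-combinatorics, and imposing $\operatorname{rk}\Pic(X)^{G}=1$ together with $G\QQ$-factoriality cuts the list of admissible $G$ down to $S_{5}\times C_{2}$, $A_{5}\times C_{2}$, the non-standard $S_{5}$ acting on the symmetric model, and a few further candidates to be excluded.

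For the groups $G$ not on the final list I would exhibit explicit $G$-Sarkisov links witnessing non-rigidity. Whenever the orbit structure of $G$ on the $15$ nodes (or on another natural $G$-invariant subvariety) produces a $G$-invariant node, or a small $G$-invariant configuration of nodes or curves, the projection from it, resolved $G$-equivariantly, yields a $G$-birational map from $X$ to another $G$-Mori fibre space --- a $G$-conic bundle or a $G$-Fano of a different type. The existence of even one such link shows that $X$ is not $G$-birationally rigid, ruling out all these cases. The highly symmetric groups $S_{5}\times C_{2}$, $A_{5}\times C_{2}$ and the non-standard $S_{5}$ are precisely those for which the $\binom{6}{2}$-action leaves invariant no node and no configuration small enough to project from, which is why only these survive.

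For the three surviving pairs $(X,G)$ I would prove $G$-birational superrigidity by the $G$-equivariant Noether--Fano method. Let $\chi\colon X\dashrightarrow Y/S$ be a $G$-birational map to a $G$-Mori fibre space, and let $\mathcal{M}=\chi^{-1}_{*}\mathcal{M}_{Y}$ be the strict transform of a very ample $G$-invariant linear system on $Y$; since $\operatorname{rk}\Pic(X)^{G}=1$ one has $\mathcal{M}\subset|-nK_{X}|$ for some $n$. If $\chi$ is not an isomorphism, then by Noether--Fano the pair $(X,\tfrac{1}{n}\mathcal{M})$ is not canonical, so there is a $G$-invariant maximal centre $Z\subset X$. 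The core of the argument is to show that no such $Z$ exists: using the $A_{5}$- or $S_{5}$-action one verifies that there is no $G$-invariant smooth point, no $G$-invariant node, and no $G$-invariant curve of small enough degree to serve as a maximal centre, the relevant local multiplicity bounds (the $4n^{2}$-type intersection inequalities at smooth points, suitably adapted at the nodes) delivering the contradiction. This forces $\chi$ to be an isomorphism and the only $G$-Mori fibre space $G$-birational to $X$ to be $X$ itself, i.e.\ $G$-birational superrigidity.

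The main obstacle I anticipate is exactly this exclusion of $G$-invariant maximal centres in the presence of the $15$ nodes. One must run the maximal-singularity analysis on a $G$-equivariant small resolution (or a suitable $G\QQ$-factorialization) and control the behaviour of $\mathcal{M}$ at the nodes, where the standard canonicity estimates need to be reworked for the singular points; simultaneously one must check, purely group-theoretically, that the specific $A_{5}$-action and the two $S_{5}$-actions fix no low-degree curve capable of violating the canonicity bound. Marrying the delicate local singularity estimates with the combinatorics of the $\binom{6}{2}$-orbits is where the real work lies.
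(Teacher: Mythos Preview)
Your outline is in the right spirit and would eventually work, but it diverges from the paper's argument in two places worth noting.

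First, the paper's initial filter is not node combinatorics but plain representation theory on the ambient $\PP^{4}=\PP(W)$: if the four-dimensional subrepresentation $V\subset W$ cut out by the hyperplane contains a two-dimensional $G$-invariant subspace, its projectivization is a $G$-invariant line, and projection from that line already gives a $G$-equivariant fibration by rational surfaces, killing rigidity. Together with the observation that a $G$-invariant node yields a $G$-conic bundle by projection, this immediately forces the hyperplane to be of the form $x_{i}+ax_{j}=0$ (Lemma~\ref{le1}). For $a\neq 0$ the paper then runs one concrete link: blow up the $G$-orbit of three nodes, flop the six $(-K)$-trivial curves, and check by a short intersection computation in $\Pic(\widetilde{X})^{G}$ that the resulting extremal contraction cannot land back on $X$ (Proposition~\ref{pr58}). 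For $a=0$ the $G\QQ$-factoriality and $\operatorname{rk}\Pic^{G}=1$ conditions are tested not via the $\binom{6}{2}$ picture you describe (the $15$ nodes are indexed by perfect matchings of $\{1,\dots,6\}$, not pairs) but via Prokhorov's embedding $\Aut(X)\hookrightarrow W(D_{5})=C_{2}^{5}\rtimes S_{5}$: the standard $S_{5}$ and $A_{5}$ fix a nonzero lattice vector and are thrown out, while $C_{5}\rtimes C_{4}$-type subgroups are eliminated by a $G$-conic bundle built from twisted cubics through a $5$-point orbit of nodes.

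Second, and more significantly, the step you flag as the main obstacle --- excluding $G$-invariant maximal centres at the nodes --- is not carried out in the paper at all. The canonicity of $(X,\tfrac{1}{\mu}\mathcal{M})$ for every movable $G$-invariant $\mathcal{M}\subset|-\mu K_{X}|$ is quoted from \cite[Theorem~4.2]{CPS1}, and superrigidity then follows from Noether--Fano. So your plan is more ambitious than the paper's: you propose to redo the \cite{CPS1} exclusion argument, whereas the paper simply invokes it. Either route is legitimate, but if you follow yours you should be aware that the delicate local estimates at the $15$ nodes are already in the literature and need not be reinvented.
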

In this article we use the following notation for groups: by $C_{n}$ we denote the cyclic group of order $n$; by $D_{2n}$ we denote the dihedral group of order $2n$; by $S_{n}$ we denote the symmetric group of rank~$n$; by $A_{n}$ we denote the alternating group of rank $n$.

The author would like to thank I. Cheltsov, Yu. Prokhorov and C. Shramov for useful discussions and comments. The author also would like to thank the organizers of the conference ``Subgroups of Cremona groups'' in Oberwolfach in June 2018 where this article was written.

\section{Biregular geometry of del Pezzo threefolds of degree 2 with 15 nodes}
  Let $X$ be a double cover of $\PP^{3}$ branched in a quartic which has 15 ordinary double points and no other singularities or, equivalently, a del Pezzo threefold of degree $2$ with 15 ordinary double points. In the sequel we will call them quartic double solids with 15 nodes. There is the following statement:
\begin{propos}[{\cite[Theorem 8.1]{Pro1}}]\label{pr1} The variety $X$ can be obtained by the following diagram:

\[\xymatrix{
 & \widehat{X}\ar[rd]^{\pi}\ar[ld]_{\tau} & &
\\
X & & H\ar@{^(->}[r] & \PP^{2}\times\PP^{2}
}
\]
  where $H$ is a smooth divisor of bi-degree $(1, 1)$ in $\PP^{2}\times\PP^{2}$, the morphism $\pi$ is a blow up of four points $P_{i}\in H$ in general position and $\tau$ is a contraction of fifteen rational curves which has zero intersection number with the canonical class $K_{\widehat{X}}$. More presicely, the proper transforms of the following curves are contracted:
\begin{itemize}
\item curves of bi-degree $(1,0)$ and $(0,1)$ passing through $P_{i}$;
\item curves of bi-degree $(1,1)$ passing through a pair of points $P_{i}$;
\item curve of bi-degree $(2,2)$ passing through all the points $P_{i}$.
\end{itemize}
\end{propos}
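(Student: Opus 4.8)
The plan is to establish the diagram in two steps: a constructive direction that builds $\widehat{X}$ from the degree-$6$ side, and a converse that recovers the diagram from an arbitrary $X$. The key geometric input is the identification of $H$. A smooth divisor of bi-degree $(1,1)$ in $\PP^2\times\PP^2$ is cut out by a nondegenerate bilinear form, so after a linear change of coordinates it is the flag variety $\{(p,\ell):p\in\ell\}\subset\PP^2\times(\PP^2)^{*}$, i.e. the del Pezzo threefold of degree $6$ with $-K_H=\mathcal{O}(2,2)|_H$ and $(-K_H)^3=48$. Its two projections to $\PP^2$ are $\PP^1$-bundles whose fibres are precisely the curves of bi-degree $(1,0)$ and $(0,1)$ lying on $H$, and through each point of $H$ there passes exactly one fibre of each type.

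For the constructive direction I take four points $P_1,\dots,P_4\in H$ in general position, let $\pi\colon\widehat{X}\to H$ be their blow-up with exceptional divisors $E_1,\dots,E_4$, and compute $-K_{\widehat{X}}=\pi^{*}(-K_H)-2\sum_i E_i$, so that $(-K_{\widehat{X}})^3=48-4\cdot 8=16$. I then locate the $(-K_{\widehat{X}})$-trivial curves: through each $P_i$ one $(1,0)$- and one $(0,1)$-curve, through each of the six pairs $\{P_i,P_j\}$ one $(1,1)$-curve, and through all four points one $(2,2)$-curve. Writing $-K_{\widehat{X}}\cdot\widetilde{C}=(-K_H\cdot C)-2\,(\#\{\text{blown-up points on }C\})$ gives $2-2\cdot 1=0$, $4-2\cdot 2=0$ and $8-2\cdot 4=0$ respectively, so all $8+6+1=15$ proper transforms are $(-K_{\widehat{X}})$-trivial. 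After verifying that for general $P_i$ these fifteen rational curves are pairwise disjoint and that $-K_{\widehat{X}}$ is strictly positive on every other curve (hence nef and big), the anticanonical morphism $\tau\colon\widehat{X}\to X$ contracts precisely these fifteen $(-1,-1)$-curves to fifteen ordinary double points, and $X$ is a del Pezzo threefold of degree $2$ since $(-K_X)^3=16$.

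For the converse I start from an arbitrary del Pezzo threefold $X$ of degree $2$ with exactly fifteen nodes, take a small crepant resolution $\widehat{X}\to X$ replacing each node by a $\PP^1$, and check that $\widehat{X}$ is a smooth weak Fano threefold with $(-K_{\widehat{X}})^3=16$. A defect computation for the fifteen nodes should give $\rho(\widehat{X})=6$. I then analyse the $K$-negative extremal rays: lines $\ell$ in a prospective exceptional divisor satisfy $-K_{\widehat{X}}\cdot\ell=2>0$, so running the minimal model program I contract four divisorial rays and reach a smooth del Pezzo threefold of degree $6$ and Picard rank $2$, which by Fujita's classification is the flag variety $H$. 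This realises the composite as the blow-up $\pi$ at four points $P_i$, identifies the fifteen flopping curves with the special curves above, and reconstructs the whole diagram.

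The main obstacle is this converse classification step. The delicate points are: showing the small resolution is genuinely weak Fano and computing $\rho(\widehat{X})$ correctly from the node defect; controlling the minimal model program so that it terminates exactly on $H$ rather than on some other rank-$2$ Fano threefold; and, on the constructive side, proving that the general-position hypothesis on $P_1,\dots,P_4$ is \emph{equivalent} to $X$ having exactly fifteen (and no further) nodes, i.e. that the fifteen $(-K_{\widehat{X}})$-trivial curves are mutually disjoint. I expect the cleanest route to the uniqueness of $H$ to be the explicit two-ray game on $\widehat{X}$ together with the numerical constraints $\rho(\widehat{X})=6$ and $(-K_{\widehat{X}})^3=16$, rather than an appeal to a general classification list.
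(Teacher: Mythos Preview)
The paper does not give its own proof of this proposition at all: it is stated with the citation \cite[Theorem~8.1]{Pro1} and then used as a black box. So there is no ``paper's proof'' to compare your proposal against; you are reconstructing Prokhorov's argument rather than the present author's.

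That said, your sketch is in the right spirit and the constructive direction is essentially correct (the intersection numbers you compute are right, and the count $8+6+1=15$ is the intended one). The converse direction, however, has real gaps that you yourself flag but perhaps underestimate. First, ``a small crepant resolution'' is not unique: there are $2^{15}$ of them related by flops, and only some of them admit a morphism to a smooth Fano; you need either to pick the right one or to argue via flops that one of them works. Second, the claim $\rho(\widehat{X})=6$ is equivalent to the defect of the $15$ nodes being $5$, and this is not a generic defect computation but a genuine input about $15$-nodal quartic surfaces (it is exactly here that Prokhorov's classification does nontrivial work). Third, once you know $\rho(\widehat{X})=6$, the assertion that the MMP contracts precisely four smooth divisors to smooth points and terminates on the flag variety, rather than on $\PP^{1}\times\PP^{1}\times\PP^{1}$ or some other configuration, is the actual content of the theorem and cannot be obtained just from $\rho$ and $(-K)^{3}$; Prokhorov's proof analyses the extremal rays step by step. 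Your proposed ``two-ray game plus numerics'' shortcut will not by itself distinguish the possible terminal models. If you want a self-contained argument, the cleanest route is probably to follow Prokhorov directly: show the defect is $5$, pass to a $\QQ$-factorialisation, and classify the first $K$-negative contraction explicitly.
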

\begin{definition} \emph{The Igusa quartic} $I$ is the three-dimensional quartic which can be explicitly given by the following system of equations in $\PP^{5}$:
$$s_{1}=4s_{4}-s_{2}^{2}=0,$$ 
where $s_{j}=\sum\limits_{i=1}^{6}x_{i}^{j}$ (for our purposes such numeration of coordinates in $\PP^{5}$ is more convenient). \emph{The Coble fourfold} $Z$ is a double cover of $\PP^{4}$ ramified in the Igusa quartic $I$. In other words, $Z$ can be explicitly given by the following system of equations in $\PP(2, 1, 1, 1, 1, 1, 1)$ with weighted homogeneous coordinates $z, x_{1},\ldots, x_{6}$:
$$s_{1}=0,\ z^{2}=4s_{4}-s_{2}^{2}.$$

\end{definition}
The following structure theorem was proved recently by I. Cheltsov, A. Kuznetsov and C.~Shramov:
\begin{theorem}[{\cite[Theorem 1.9, Proposition 2.21]{CKS1}}]\label{th1} The Coble fourfold $Z$ can be obtained by the following diagram:
$$\xymatrix{
 & \widehat{Z}\ar[rd]^{\xi}\ar[ld]_{\phi} &
\\
Z & & \PP^{2}\times\PP^{2}
}$$
  where $\xi$ is a blow up of four points $P_{i}=(p_{1, i}, p_{2, i})$ in general position (without loss of generality we can assume that such points have coordinates $$(1:0:0,1:0:0),\ (0:1:0,0:1:0),\ (0:0:1,0:0:1),\ (1:1:1,1:1:1)$$ respectively). The map $\phi$ is a small morphism which contracts the proper transform of the following fifteen surfaces:
\begin{itemize}
\item eight planes of the form $p_{1, i}\times\PP^{2}$ or $\PP^{2}\times p_{2, i}$;
\item six quadrics of the form $l_{i, j}\times l'_{i,j}$ where $l_{i, j}$ (resp. $l'_{i, j}$) is a line in $\PP^{2}$ passing through the points $p_{1, i}$ and $p_{1, j}$ (resp. $p_{2, i}$ and $p_{2, j}$);
\item diagonal in $\PP^{2}\times\PP^{2}$.
\end{itemize}
\end{theorem}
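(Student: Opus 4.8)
The plan is to produce the diagram by exhibiting $Z$ as the Stein factorisation of an explicit morphism from the blow-up $\xi\colon\widehat Z\to\PP^2\times\PP^2$ of the four points $P_1,\dots,P_4$. Let $h_1,h_2$ be the two hyperplane classes on $\PP^2\times\PP^2$, let $E_1,\dots,E_4$ be the exceptional divisors of $\xi$, and set $L=h_1+h_2-E_1-E_2-E_3-E_4$. The sections of $L$ are exactly the bidegree-$(1,1)$ forms $\sum a_{ij}u_iv_j$ vanishing at the four points: the three coordinate points force $a_{00}=a_{11}=a_{22}=0$, and $(1{:}1{:}1,1{:}1{:}1)$ imposes one further linear relation, so $h^0(L)=5$ and $L$ defines a rational map $\PP^2\times\PP^2\dashrightarrow\PP^4$ resolved by $\xi$. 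After checking that $L$ is base-point free on $\widehat Z$, I would compute the top self-intersection: using $h_i^3=0$, $h_1^2h_2^2=1$, $E_i^4=-1$ and the vanishing of all mixed monomials in $h$ and $E$, one gets $L^4=(h_1+h_2)^4+\sum_i E_i^4=6-4=2$. Hence the morphism $\psi\colon\widehat Z\to\PP^4$ attached to $\lvert L\rvert$ is generically two-to-one onto $\PP^4$, and its Stein factorisation yields a finite double cover $Z'\to\PP^4$ together with a birational morphism $\phi\colon\widehat Z\to Z'$. A Hurwitz computation fixes the branch: since $K_{\widehat Z}=-3h_1-3h_2+3\sum E_i$ and $\psi^*K_{\PP^4}=-5L$, the ramification is $R=K_{\widehat Z}-\psi^*K_{\PP^4}=2L=\psi^*\mathcal{O}(2)$, so the branch divisor is cut out by a quartic, consistently with the Igusa quartic.

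The heart of the matter, and the step I expect to be the main obstacle, is to identify this branch quartic with the Igusa quartic. Directly, one takes the six forms $u_iv_j$ ($i\ne j$) as coordinates, eliminates $(u,v)$ to obtain the discriminant of the two-to-one map, and matches it with $4s_4-s_2^2=0$ inside $\{s_1=0\}$. Rather than carrying out the elimination by hand, I would exploit symmetry: the configuration of the four diagonal points is preserved by a finite group acting on $\PP^2\times\PP^2$ (the permutations of the four points together with the factor swap), this group lifts to a subgroup of the $S_6$ permuting $x_1,\dots,x_6$ on the Igusa quartic, and invariance forces the branch quartic into the tiny space of invariant quartics on $\{s_1=0\}$, which is spanned by $s_2^2$ and $s_4$. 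This pins it down up to scalar and gives $Z'\cong Z$. This is the only place where the explicit projective geometry of the Igusa quartic genuinely enters and cannot be replaced by a formal argument.

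Finally I would determine the exceptional locus of $\phi$ by restricting $L$ to each of the fifteen listed surfaces. Since all four points lie on the diagonal $\Delta\cong\PP^2$, its proper transform carries $L|=2h-\sum_{i=1}^4 e_i$ with $(2h-\sum e_i)^2=0$; each plane $p_{1,i}\times\PP^2$ or $\PP^2\times p_{2,i}$ carries $L|=h-e_i$ with $(h-e_i)^2=0$; and each quadric $l_{i,j}\times l'_{i,j}\cong\PP^1\times\PP^1$ carries $L|=f_1+f_2-e_i-e_j$ with self-intersection $0$. In every case $L|_S$ is not big, so $\psi$, and hence $\phi$, contracts $S$ to a curve; as these fifteen surfaces have codimension two and no divisor among them is contracted, $\phi$ is small. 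A local analysis along the contracted curves shows $Z'$ is normal with precisely these curves as singular locus and that no further surfaces are contracted, so $Z'$ is the Coble fourfold and $(\xi,\phi)$ is the asserted diagram.
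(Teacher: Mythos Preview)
The paper does not give its own proof of this statement: Theorem~\ref{th1} is quoted verbatim from \cite[Theorem 1.9, Proposition 2.21]{CKS1} and used as a black box, so there is nothing in the present paper to compare your argument against. Your outline is broadly the right strategy (and is in fact close in spirit to what \cite{CKS1} does): exhibit the anticanonical-type class $L=h_1+h_2-\sum E_i$, check $h^0(L)=5$ and $L^4=2$, take the Stein factorisation, and then identify the branch quartic.

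That said, two points in your sketch are genuine gaps rather than routine checks. First, the symmetry argument for identifying the branch quartic with the Igusa quartic is not sufficient as stated. The subgroup of $\Aut(\PP^2\times\PP^2)$ preserving the four diagonal points that acts \emph{linearly} on $H^0(L)$ is only $S_4\times C_2$ (permutations of the four points and the factor swap); its invariant quartics on $\PP^4$ form a space strictly larger than $\langle s_2^2,\,s_4\rangle$, so invariance alone does not pin the branch down. In \cite{CKS1} this step is handled either by an explicit elimination in coordinates or by bringing in the Cremona involutions to enlarge the symmetry to a genuine $S_6$; you should do one of these rather than assert that invariance suffices. Second, your argument that $\phi$ is small is incomplete: showing $(L|_S)^2=0$ on the fifteen listed surfaces tells you they are contracted, but you have not excluded the possibility that some \emph{divisor} is contracted. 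The clean way is to note that $\operatorname{Pic}(\widehat Z)$ is generated by $h_1,h_2,E_1,\dots,E_4$, compute $L^3\cdot h_i=3$ and $L^3\cdot E_i=1$, and then argue that no effective divisor class $D=a_1h_1+a_2h_2-\sum m_iE_i$ (with $a_i\ge 0$, $m_i\ge 0$) can satisfy $L^3\cdot D=3a_1+3a_2-\sum m_i=0$ unless $D$ is a combination already accounted for; this requires a short multiplicity bound. Finally, your Hurwitz computation silently uses that $\phi$ is crepant (hence small), so it should come after, not before, that step.
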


From the previous theorem and Proposition~\ref{pr1} we deduce the following proposition.
\begin{propos}\label{pr2} Let $X$ be a quartic double solid with 15 ordinary double points. Then $X$ is isomorphic to a double cover of $\PP^{3}$ branched in a hyperplane section of the Igusa quartic.
\end{propos}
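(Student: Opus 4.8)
The plan is to compare the two resolutions directly, realizing Prokhorov's $\widehat{X}$ as a divisor inside the Cheltsov--Kuznetsov--Shramov resolution $\widehat{Z}$. First I would apply Proposition~\ref{pr1} to present $X$ as $\tau(\widehat{X})$, where $\widehat{X}$ is the blow up of a smooth $(1,1)$-divisor $H\subset\PP^{2}\times\PP^{2}$ at four points $P_{i}\in H$ in general position and $\tau$ contracts the fifteen listed curves. Since four general points of $\PP^{2}\times\PP^{2}$ can be normalized by $\Aut(\PP^{2}\times\PP^{2})=(\mathrm{PGL}_{3}\times\mathrm{PGL}_{3})\rtimes C_{2}$ to the standard configuration of Theorem~\ref{th1} (normalizing the two projections independently to the frame $e_{1},e_{2},e_{3},\mathbf{1}$ simultaneously places all four points on the diagonal), I may assume the $P_{i}$ coincide with the four points used there to build the Coble fourfold $Z$.

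Next I would realize $\widehat{X}$ inside $\widehat{Z}=\xi^{-1}$, the blow up of $\PP^{2}\times\PP^{2}$ at the $P_{i}$. As $H$ is smooth and passes through each $P_{i}$, its proper transform in $\widehat{Z}$ is exactly the blow up of $H$ at the four points, i.e.\ $\widehat{X}$; thus $\widehat{X}\subset\widehat{Z}$ is a divisor in $|\xi^{*}\mathcal{O}(1,1)-\sum_{i}E_{i}|$, with $E_{i}$ the exceptional divisors of $\xi$. The heart of the argument is then to check that the small contraction $\phi$ restricts on $\widehat{X}$ to $\tau$. For this I would intersect each of the fifteen surfaces contracted by $\phi$ with $\widehat{X}$ and read off bidegrees: a plane $p_{1,i}\times\PP^{2}$ (resp.\ $\PP^{2}\times p_{2,i}$) meets $H$ in a line through $P_{i}$, i.e.\ a curve of bidegree $(0,1)$ (resp.\ $(1,0)$); a quadric $l_{i,j}\times l'_{i,j}$ meets $H$ in a $(1,1)$-curve through $P_{i}$ and $P_{j}$; and the diagonal meets $H$ in a $(2,2)$-curve through all four points (here one uses that the standard points lie on the diagonal). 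These are precisely the fifteen curves contracted by $\tau$, so $\phi|_{\widehat{X}}$ and $\tau$ contract the same curves and hence coincide. Therefore $X\cong\phi(\widehat{X})$ embeds in $Z$ as a divisor.

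Finally I would identify this divisor through the double cover $Z\to\PP^{4}$. The expected main obstacle is to match the two birational models correctly, i.e.\ to show that the composition $\widehat{Z}\to Z\to\PP^{4}$ is given by the proper transform of the linear system of $(1,1)$-divisors through the four points. The point requiring care is that the induced map $\PP^{2}\times\PP^{2}\dashrightarrow\PP^{4}$ is genuinely of degree two: projecting the Segre variety (of degree $6$) from the $\PP^{3}$ spanned by the four points leaves two residual intersection points in a general fibre, so the map is two-to-one and every $(1,1)$-divisor through the $P_{i}$ is invariant under the deck involution and double-covers its image $\PP^{3}$. Granting this, a hyperplane $\Lambda\subset\PP^{4}$ pulls back to the proper transform of a single $(1,1)$-divisor through the $P_{i}$, which is exactly a copy of $\widehat{X}$; consequently $X$ is the preimage in $Z$ of $\Lambda$. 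Since $Z\to\PP^{4}$ is branched in the Igusa quartic $I$, restricting over $\Lambda$ exhibits $X$ as a double cover of $\PP^{3}=\Lambda$ branched in $I\cap\Lambda$, a hyperplane section of $I$, as claimed.
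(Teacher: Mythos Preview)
Your proposal is correct and follows essentially the same route as the paper: embed $\widehat{X}$ into $\widehat{Z}$, verify that the fifteen $\phi$-contracted surfaces meet $\widehat{X}$ exactly in the fifteen $\tau$-contracted curves, and conclude $X\subset Z$ maps to a hyperplane in $\PP^{4}$. The only noteworthy difference is the last step: the paper observes directly that $\widehat{X}\sim -\tfrac{1}{3}K_{\widehat{Z}}$ and quotes \cite[Section 2.1]{CKS1} for the fact that $\phi'\circ\phi$ is given by $|-\tfrac{1}{3}K_{\widehat{Z}}|$, whereas you recover the same linear system $|\xi^{*}\mathcal{O}(1,1)-\sum E_{i}|$ by a Segre-projection degree count; both arguments land on the same identification.
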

\begin{proof} We will use the notation which was introduced is Proposition~\ref{pr1}. We have the following commutative diagram:
$$\xymatrix{
\widehat{X}\ar[d]^{\pi} \ar@{^(->}[r]& \widehat{Z}\ar[d]^{\xi}
\\
H\ar@{^(->}[r]& \PP^{2}\times\PP^{2}
}$$
  where $H$ is a smooth divisor of bi-degree $(1, 1)$ and the morphisms $\xi$ and $\pi$ are blow ups of four points on $H$ in general position. It is easy to see that intersections of $15$ surfaces listed in Theorem~\ref{th1} with the variety $\widehat{X}$ are precisely curves contracted by the morphism $\tau$. Hence we have the following commutative diagram:
$$\xymatrix{
 & & \widehat{X}\ar[rd]^{\pi}\ar[lld]_{\tau} \ar@{^(->}[r]& \widehat{Z}\ar[rd]^{\xi}\ar[lld]_{\phi} &
\\
X\ar@{^(->}[r]\ar[d]^{\phi'|_{X}} & Z\ar[d]^{\phi'} & & H\ar@{^(->}[r]& \PP^{2}\times\PP^{2}
\\
S\ar@{^(->}[r] & \PP^{4} & & &
}$$
Note that the divisor $\widehat{X}$ is equivalent to $-\frac{1}{3}K_{\widehat{Z}}$ and the morphism $\phi'\circ\phi$ is given by the linear system $|-\frac{1}{3}K_{\widehat{Z}}|$ (see~\cite[Section 2.1]{CKS1}), thus $S$ is a hyperplane section of the Igusa quartic $I$ and~$X$ is a subvariety of the Coble fourfould given by a linear equation in coordinates $x_{1},\ldots, x_{6}$.
\end{proof}
\begin{corollary}\label{cor3} Every quartic surface with $15$ nodes is a hyperplane section of the Igusa quartic.
\end{corollary}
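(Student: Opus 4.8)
The plan is to reduce the statement to Proposition~\ref{pr2} by passing from the quartic surface to its associated double solid. First I would fix a quartic surface $S\subset\PP^{3}$ with exactly $15$ nodes and form the double cover $\pi_{S}\colon X\to\PP^{3}$ branched along $S$. A purely local computation at each singular point shows that $X$ acquires exactly one ordinary double point over each node of $S$ and is smooth elsewhere: in suitable local coordinates the leading term of $S$ is $x^{2}+y^{2}+z^{2}$, so near the preimage of a node the cover is $\{w^{2}=x^{2}+y^{2}+z^{2}+(\text{higher order})\}$, a rank-$4$ quadric singularity, i.e. an ordinary double point of the threefold. Hence $X$ is a del Pezzo threefold of degree $2$ with precisely $15$ ordinary double points, that is, a quartic double solid with $15$ nodes in the sense of the previous section.

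Next I would apply Proposition~\ref{pr2} to this $X$. It provides an isomorphism $X\cong X'$, where $X'$ is the double cover of $\PP^{3}$ branched in a hyperplane section $S'$ of the Igusa quartic $I$. What remains is to transport this \emph{abstract} isomorphism of threefolds to a \emph{projective} equivalence between the branch surfaces $S$ and $S'$.

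The key point, and the step I expect to require the most care, is that the double cover structure of a del Pezzo threefold of degree $2$ is intrinsic. Indeed, $\Pic(X)$ is generated by an ample Cartier divisor $H$ with $-K_{X}=2H$ and $H^{3}=2$, and the double cover $X\to\PP^{3}$ coincides with the morphism $\varphi_{|H|}$ given by the half-anticanonical linear system $|H|=|-\tfrac12 K_{X}|$, whose target is $\PP^{3}=\PP(H^{0}(X,H)^{\vee})$. Since an ordinary double point is Gorenstein, $K_{X}$ is Cartier and this description survives in the nodal setting. Consequently any isomorphism $X\cong X'$ carries $H$ to the corresponding ample generator on $X'$, hence identifies $\varphi_{|H|}$ with $\varphi_{|H'|}$ up to an element of $\operatorname{PGL}_{4}(\mathbb{C})$ acting on the target $\PP^{3}$. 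As the branch locus is the intrinsically defined image of the ramification locus of $\varphi_{|H|}$, this projective transformation must send $S$ to $S'$.

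Finally I would conclude that $S$ is projectively equivalent to $S'$, a hyperplane section of the Igusa quartic, which is exactly the assertion of the corollary. The only genuine subtlety is verifying that the ample generator $H$ is respected by isomorphisms and that the half-anticanonical morphism recovers the original double cover in the presence of the nodes; once this is settled, the statement follows formally from Proposition~\ref{pr2}.
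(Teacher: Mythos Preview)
Your argument is correct and is exactly the route the paper has in mind: the corollary is stated immediately after Proposition~\ref{pr2} with no separate proof, and the intended reasoning is precisely to pass from $S$ to the double solid $X$, invoke Proposition~\ref{pr2}, and then use that the half--anticanonical map $|{-\tfrac12}K_X|$ intrinsically recovers the double cover, so the branch loci are projectively equivalent. One small caution: the assertion that $\Pic(X)$ is generated by $H$ is not actually needed (and for a $15$--nodal double solid the Weil class group has rank $6$); what you use, and what is true, is that $H=-\tfrac12 K_X$ is a well--defined Cartier class preserved by any isomorphism, which already forces the identification of the branch quartics.
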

\begin{remark} In the paper~\cite{RRS} the authors proved that a general quartic threefold with 15 ordinary double points is a hyperplane section of the Igusa quartic. From the previous corollary we see that in fact every quartic with 15 nodes has the same property.
\end{remark}
\begin{remark} In the sequel we will say that the variety $X$ is a hyperplane section of the Coble fourfold, although it is not completely correct, since the corresponding linear system is not very ample.
\end{remark}
For classification of automorphism groups of del Pezzo threefolds of degree 2 with 15 nodes we need the following well-known properties of the Igusa quartic.
\begin{propos}\begin{enumerate}
\item The automorphism group of the Igusa quartic is isomorphic to $S_{6}$ and acts by permutations of coordinates.
\item The singular set of the Igusa quartic $I$ consists of $15$ lines which can be described explicitly by the following equations: $$x_{\sigma(1)}=x_{\sigma(2)}, x_{\sigma(3)}=x_{\sigma(4)}, x_{\sigma(5)}=x_{\sigma(6)}$$ where $\sigma$ is an element of the group $S_{6}$. Such lines we denote by $l_{\alpha}$ where $\alpha$ is a partition of the set $\{1, 2, 3, 4, 5, 6\}$ into three pairs of elements.
\item There are exactly $10$ hyperplanes whose intersection with the Igusa quartic is a quartic surface with multiplicity $2$. They can be explicitly given by the following equations:
     $$x_{\sigma(1)}+x_{\sigma(2)}+x_{\sigma(3)}=x_{\sigma(4)}+x_{\sigma(5)}+x_{\sigma(6)}=0$$
      where $\sigma$ is an element of the group $S_{6}$. We denote such hypersurfaces by $H_{\beta}$ where $\beta$ is a partition of the set $\{1, 2, 3, 4, 5, 6\}$ into two triples of elements.
\item Every hyperplane $H_{\beta}$ contains exactly six lines $l_{\alpha}$ while every line $l_{\alpha}$ lies exactly on four hyperplanes $H_{\beta}$. In other words, they form a $(15_{4}, 10_{6})$-configuration in notation of~\cite{Dol2}. Also, every pair of hyperplanes $H_{\beta}$ contain exactly two common lines $l_{\alpha}$.
\end{enumerate}
\end{propos}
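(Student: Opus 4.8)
The plan is to dispatch the four assertions by direct computation, the engine being the elementary identity for three variables summing to zero, and to reduce the two counting statements to a single orbit under the $S_{6}$-action. For (i), observe first that each power sum $s_{j}$ is symmetric, so both defining equations $s_{1}=0$ and $4s_{4}-s_{2}^{2}=0$ are preserved by the $S_{6}$ permuting coordinates; hence $S_{6}\subseteq\Aut(I)$. For the reverse inclusion I would invoke the classical projective duality between $I$ and the Segre cubic threefold, whose automorphism group is well known to be $S_{6}$: an automorphism of $I$ induces one of the dual variety and thus lies in $S_{6}$. (Alternatively one could let $\Aut(I)$ act on the fifteen singular lines of (ii) and analyse that action, but the duality is the cleaner route, and it is the step that leans on external classical input.)

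The computational heart is the following: \emph{if} $a+b+c=0$ \emph{then} $a^{2}+b^{2}+c^{2}=-2(ab+bc+ca)$ \emph{and hence} $a^{4}+b^{4}+c^{4}=2(ab+bc+ca)^{2}=\tfrac12(a^{2}+b^{2}+c^{2})^{2}$. For (ii) I would compute the singular locus inside $\{s_{1}=0\}$ by Lagrange multipliers: a point is singular exactly when $16x_{i}^{3}-4s_{2}x_{i}=\lambda$ for one constant $\lambda$ and all $i$, together with $s_{1}=0$ and $4s_{4}-s_{2}^{2}=0$. Each $x_{i}$ is then a root of the fixed cubic $16t^{3}-4s_{2}t-\lambda$, whose three roots sum to zero, so the coordinates take at most three values $a,b,c$ with $a+b+c=0$; combined with $s_{1}=0$, the generic distribution has each value occurring twice, placing the point on one of the loci $x_{\sigma(1)}=x_{\sigma(2)}$, $x_{\sigma(3)}=x_{\sigma(4)}$, $x_{\sigma(5)}=x_{\sigma(6)}$. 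There are $6!/(2^{3}3!)=15$ such loci, each a line. That every $l_{\alpha}$ lies on $I$ is precisely the displayed identity, and using $a^{2}+(ab+bc+ca)=a(a+b+c)+bc=bc$ one checks the Lagrange condition holds with common value $\lambda=16abc$, so the $l_{\alpha}$ are singular. The main obstacle in (ii) is the converse: one must rule out the degenerate distributions of the cubic's roots (two values, or three values with unequal multiplicities) as sources of extra singular points, a finite case check that I expect to be the fiddliest part.

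For (iii), restrict $F=4s_{4}-s_{2}^{2}$ to a hyperplane $H_{\beta}=\{x_{i_{1}}+x_{i_{2}}+x_{i_{3}}=x_{j_{1}}+x_{j_{2}}+x_{j_{3}}=0\}$, noting $H_{\beta}\subset\{s_{1}=0\}$ automatically. Writing $u$ and $v$ for the two triples of coordinates and applying the identity to each, one gets $s_{4}=\tfrac12 s_{2}(u)^{2}+\tfrac12 s_{2}(v)^{2}$ on $H_{\beta}$, whence $4s_{4}-s_{2}^{2}=(s_{2}(u)-s_{2}(v))^{2}$ is the square of a quadric, so the quartic section is a double quadric surface. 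There are $\tfrac12\binom{6}{3}=10$ such partitions $\beta$, giving the ten hyperplanes; as in (i), the claim that these are the \emph{only} hyperplanes with everywhere-double section I would settle via $S_{6}$-transitivity together with the duality to the Segre cubic, whose ten nodes correspond exactly to these tropes.

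Finally, (iv) is purely combinatorial once the descriptions of $l_{\alpha}$ and $H_{\beta}$ are in hand. A defining form of $H_{\beta}$ vanishes identically on $\{(a,a,b,b,c,c):a+b+c=0\}$ iff the coefficient sums over the three pairs of $\alpha$ are all equal; since they sum to $3$, each equals $1$, i.e. $l_{\alpha}\subset H_{\beta}$ exactly when every pair of $\alpha$ is split by $\beta$ (one element in each triple). Counting such configurations gives $3!=6$ split pairings in each $H_{\beta}$ and $2^{3}/2=4$ hyperplanes through each $l_{\alpha}$, consistent with $15\cdot4=60=10\cdot6$, i.e. the $(15_{4},10_{6})$-configuration. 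For the last sentence, two distinct triple-partitions always meet in the block-intersection pattern of sizes $(2,1,1,2)$, on which $S_{6}$ acts transitively, so it suffices to count the common split-pairings in one explicit instance, where one finds exactly two; here I expect no difficulty beyond bookkeeping.
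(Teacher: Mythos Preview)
Your proposal is correct and self-contained. The paper itself does not give a proof in the usual sense: the proposition is stated as ``well-known'' and followed only by a remark that all four items can be deduced from the classical projective duality between the Igusa quartic and the Segre cubic, together with the fact that the nodes and planes of the Segre cubic form the $(15_{4},10_{6})$-configuration (with a reference to Dolgachev). In other words, the paper outsources the whole proposition to that duality.

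Your route is genuinely different for parts (ii)--(iv): you work directly with the defining equations, driving everything through the identity $a^{4}+b^{4}+c^{4}=\tfrac12(a^{2}+b^{2}+c^{2})^{2}$ for $a+b+c=0$, the Lagrange-multiplier description of the singular locus, and explicit combinatorics on partitions. This buys a proof that is elementary and almost entirely internal to the paper, at the cost of the case analysis you flag in (ii). For (i), and for the ``exactly ten'' clause in (iii), you fall back on the same duality the paper invokes, so there the two approaches coincide. Overall your argument is more informative than the paper's remark, and the only place that actually requires care beyond bookkeeping is, as you note, excluding the degenerate multiplicity patterns in (ii).
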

\begin{remark} These properties can be easily deduced from the fact that the Igusa quartic is the dual variety of the Segre cubic, while singular points of the Segre cubic and planes on it form a $(15_{4}, 10_{6})$-configuration (see~\cite[\S 9.4.4]{Dol1}).
\end{remark}

\begin{definition} \emph{The automorphism of the} $(15_{4}, 10_{6})$-\emph{configuration} is a permutation of sets~$H_{\beta}$ and $l_{\alpha}$ which preserves the relation ``a line lies on a plane''.
\end{definition}
\begin{lemma} The automorphism group of the $(15_{4}, 10_{6})$-configuration is isomorphic to $S_{6}$.
\end{lemma}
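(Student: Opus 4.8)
The plan is to prove that the natural homomorphism $\rho\colon S_6\to\mathcal{A}$ is an isomorphism, where $\mathcal{A}$ denotes the automorphism group of the $(15_4,10_6)$-configuration, by constructing an inverse through a purely combinatorial reconstruction of the six indices $\{1,\dots,6\}$ from the incidence data alone. The map $\rho$ is induced by relabelling the six indices: since the relation ``$l_\alpha$ lies on $H_\beta$'' is invariant under permuting indices, $\rho$ is a well-defined homomorphism whose image is exactly the group induced by $S_6$. It therefore suffices to produce a homomorphism $\Psi\colon\mathcal{A}\to S_6$ which is injective and satisfies $\Psi\circ\rho=\mathrm{id}$: then $\rho$ is injective, $\Psi$ is simultaneously surjective and injective, and both are isomorphisms.

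For the reconstruction, recall (as one checks directly from the explicit description of the configuration) that $l_\alpha$ lies on $H_\beta$ precisely when each of the three pairs of $\alpha$ contains one element of each of the two triples of $\beta$; in particular each $H_\beta$ carries the six matchings between its two triples, in agreement with the counts recorded above. The key combinatorial lemma I would prove is: two distinct lines $l_\alpha,l_{\alpha'}$ lie on exactly \emph{two} common planes when $\alpha$ and $\alpha'$ share a common pair, and on exactly \emph{one} common plane otherwise. Granting this, define a graph $\Lambda$ on the fifteen lines by joining two lines whenever they lie on two common planes. Each line is then joined to exactly six others (two for each of its three pairs), and a short inspection shows that among these six neighbours the induced graph is a disjoint union of three edges, the edge attached to a given pair of $\alpha$ being formed by the two other lines sharing that pair with $l_\alpha$. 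Consequently the three pairs of $\alpha$ are intrinsic to $\Lambda$. Moreover, any triangle through $l_\alpha$ must use two of its neighbours that are themselves joined, hence share the \emph{same} pair with $l_\alpha$; so every triangle of $\Lambda$ consists precisely of the three lines through one common pair, and the fifteen pairs (duads) of $\{1,\dots,6\}$ are recovered canonically as the triangles of $\Lambda$.

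Having recovered the fifteen pairs, I recover the six points as follows. Two pairs are disjoint exactly when some line contains both of them, that is, when the corresponding triangles of $\Lambda$ share a common vertex; hence the relation ``two pairs meet in a point'' is intrinsic. The six points are then the six maximal cliques of size five in the graph on pairs defined by this relation (the triangular graph $T(6)$), which are distinguished from the cliques of size three by their cardinality. This yields a canonical action of $\mathcal{A}$ on a six-element set, hence a homomorphism $\Psi\colon\mathcal{A}\to S_6$, and canonicity of the whole construction forces $\Psi\circ\rho=\mathrm{id}$. Finally, $\Psi$ is injective: an automorphism fixing all six points fixes every pair, hence every line; and since any two distinct planes share only two lines, each plane is determined by the six lines lying on it, so fixing all lines forces fixing all planes, and the automorphism is the identity. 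Therefore $|\mathcal{A}|=720$ and $\rho$ is an isomorphism.

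The main obstacle is the reconstruction lemma together with the local description of $\Lambda$ at a line, namely the splitting of the six neighbours into three edges and the identification of all triangles as triples of lines through a common pair. These steps rest on the short explicit computations with partitions indicated above; once they are in place the remaining group-theoretic conclusion is purely formal.
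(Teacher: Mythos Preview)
Your argument is correct, and it follows a genuinely different route from the paper's. The paper simply bounds $|G|\le 720$ by an orbit--stabilizer count: fix a line (factor $15$), then the four planes through it (factor $\le 24$), observe that the resulting subgroup fixes six further lines and can at most swap two remaining planes (factor $\le 2$), so $|G|\le 720$; since $S_6$ already acts faithfully, equality holds. Your proof instead reconstructs the six-element index set intrinsically from the incidence data: first the fifteen duads as the triangles of the graph $\Lambda$, then the six points as the maximal $5$-cliques of the associated triangular graph $T(6)$, yielding an explicit inverse homomorphism $\Psi$. The combinatorial lemmas you rely on (two lines share two common planes exactly when their partitions share a duad; the local structure of $\Lambda$ is three disjoint edges; triangles of $\Lambda$ are precisely the triples of partitions through a fixed duad) are all easily verified, and your injectivity step is clean. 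The trade-off is that the paper's counting argument is shorter and requires no auxiliary graph, while your approach is more conceptual: it explains \emph{why} the automorphism group is $S_6$ rather than merely computing its order, and it gives a recipe for reading off the permutation attached to any given automorphism.
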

\begin{proof} Let $G$ be the automorphism group of the $(15_{4}, 10_{6})$-configuration. Obviously, the group $\Aut(I)\simeq S_{6}$ acts faithfully on the configuration of singular lines and hyperplanes, so $S_{6}\subset G$. Hence it is enough to prove that $|G|\leq 720$.

  Since $G$ acts transitively on the set of lines $l_{\alpha}$, we see that $$|G|=15|\Stab(l_{((1,2)(3,4)(5,6))})|.$$ The group  $\Stab(l_{((1,2)(3,4)(5,6))})$ preserves the set of four hyperplanes $H_{\beta}$ containing the li\-ne~$l_{((1,2)(3,4)(5,6))}$, so $$|G|\leqslant 15\cdot 24\cdot |\Stab(H_{((1, 3, 5)(2, 4, 6))}, H_{((1, 3, 6)(2, 4, 5))}, H_{((1, 4, 5)(2, 3, 6))}, H_{((1, 4, 6)(2, 3, 5))})|.$$
  The last group (let us denote it by $G'$) fixes also six lines which lie in pairwise intersections of four fixed hyperplanes, in particular it fixes the line $l_{((1,4)(2,3)(5,6))}$. Two remaining hyperplanes passing through the line $l_{((1,4)(2,3)(5,6))}$ form a $G'$-invariant set. One can easily check that only the trivial element of $G'$ fixes them. Hence we deduce that $$|G|\leqslant 15\cdot 24\cdot 2=720.$$
\end{proof}
\begin{propos}\label{pr3} Let $X$ be a quartic double solid with $15$ nodes. Let $X=Z\cap \bar{H}$ be a representation of $X$ as a hyperplane section of the Coble fourfold. Then the automorphism group of $X$ coincides with the stabilizer of $\bar{H}$ in the automorphism group of the Coble fourfold~$Z$.
\end{propos}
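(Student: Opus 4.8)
The plan is to prove that the restriction homomorphism
\[
r\colon \Stab_{\Aut(Z)}(\bar{H})\longrightarrow \Aut(X),\qquad \theta\mapsto \theta|_{X},
\]
is an isomorphism. First I record the ambient group: an automorphism of $Z$ preserves the ramification of the double cover $\phi'\colon Z\to\PP^{4}$, hence descends to $\Aut(\PP^{4}, I)=\Aut(I)\cong S_{6}$ with kernel the deck involution $\iota\colon z\mapsto -z$, and this extension splits, so $\Aut(Z)\cong S_{6}\times\langle\iota\rangle$. Writing $S=I\cap\Lambda$ with $\Lambda\subset\PP^{4}$ the hyperplane cut out by $\bar{H}$, and noting that $\iota$ fixes every such $\bar{H}$, we obtain $\Stab_{\Aut(Z)}(\bar{H})=\Stab_{S_{6}}(\Lambda)\times\langle\iota\rangle$. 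The map $r$ is visibly well defined. For injectivity, suppose $\theta=(\sigma,\iota^{e})$ restricts to the identity on $X$. Since $\iota|_{X}$ is the nontrivial Galois involution of the double cover $X\to\Lambda$, we must have $e=0$; and since $S$ spans $\Lambda$ while a nontrivial coordinate permutation $\sigma$ has a proper linear fixed locus, $\sigma|_{\Lambda}=\mathrm{id}$ forces $\sigma=1$. Hence $r$ is injective.

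The substance of the proposition is the surjectivity of $r$. Fix $g\in\Aut(X)$. By Proposition~\ref{pr2} the morphism $X\to\Lambda\cong\PP^{3}$ is the double cover branched in $S$, and it is defined by the half-anticanonical linear system, hence is $\Aut(X)$-equivariant: thus $g$ normalizes the Galois involution and descends to $\bar{g}\in\Aut(\PP^{3})$ with $\bar{g}(S)=S$. Now $\bar{g}$ must preserve the intrinsic projective geometry of $S$: it permutes the $15$ nodes $q_{\alpha}=l_{\alpha}\cap\Lambda$ of $S$, and it permutes the $10$ planes $\Pi_{\beta}=H_{\beta}\cap\Lambda$, which are characterized intrinsically as the planes meeting $S$ in a double conic. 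The incidences between the $q_{\alpha}$ and the $\Pi_{\beta}$ are exactly those of the $(15_{4},10_{6})$-configuration, so $\bar{g}$ induces an automorphism of that configuration; by the lemma on the automorphism group of the $(15_{4},10_{6})$-configuration, this automorphism is realized by a unique element $\tau\in S_{6}=\Aut(I)\subset\Aut(Z)$, inducing some permutation $\rho$ of the nodes.

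It remains to upgrade this combinatorial matching to a projective one, and this is the \emph{main obstacle}. A priori $\tau$ only reproduces the permutation $\rho$ that $\bar{g}$ induces on the nodes; it need not stabilize the hyperplane $\Lambda$, since a generic element of $S_{6}$ moves $\Lambda$. I would resolve this by a rigidity argument: because $\bar{g}(q_{\alpha})=q_{\rho(\alpha)}$ are honest points of $\Lambda$ and the $15$ nodes span $\Lambda$, the composite $\tau\circ\bar{g}^{-1}$ is a projective isomorphism $\Lambda\to\tau(\Lambda)$ carrying, for every $\alpha$, the point $l_{\alpha}\cap\Lambda$ to $l_{\alpha}\cap\tau(\Lambda)$; a general-position analysis of the $15$ singular lines $l_{\alpha}$ and their sections by a hyperplane then forces $\tau(\Lambda)=\Lambda$. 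Granting this, $\tau$ stabilizes $\bar{H}$, and $\tau|_{\Lambda}$ and $\bar{g}$ are two elements of $\operatorname{PGL}_{4}$ that agree on the $15$ nodes; as these contain a projective frame, $\tau|_{\Lambda}=\bar{g}$. Finally, lifting back through the double cover, $g$ equals $r(\tau)$ or $r(\tau\iota)$ according to its action on the fibres, which proves surjectivity and completes the identification $\Aut(X)\cong\Stab_{\Aut(Z)}(\bar{H})$.
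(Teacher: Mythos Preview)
Your approach matches the paper's: reduce to showing that the linear automorphism group of $S=I\cap\Lambda$ equals $\Stab_{S_{6}}(\Lambda)$, embed the former in $S_{6}$ via its action on the $(15_{4},10_{6})$-configuration of nodes and tangent planes, and then argue that the resulting element $\tau\in S_{6}$ must preserve $\Lambda$. The injectivity, the descent through the half-anticanonical double cover, and the invocation of the configuration lemma are all as in the paper.

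The gap is exactly where you flag it. You correctly reduce to the claim: if a projective isomorphism $\psi\colon\Lambda\to\Lambda'=\tau(\Lambda)$ sends $l_{\alpha}\cap\Lambda$ to $l_{\alpha}\cap\Lambda'$ for every $\alpha$, then $\Lambda=\Lambda'$. But ``a general-position analysis of the $15$ singular lines'' is a promissory note, not a proof; it is the specific geometry of the $l_{\alpha}$ that makes this work, and you have not used it. The paper's resolution goes as follows. Choose $\beta$ with $H_{\beta}\cap\Lambda\neq H_{\beta}\cap\Lambda'$. The surface $H_{\beta}\cap I$ is a smooth quadric containing six of the lines $l_{\alpha}$, three in each ruling. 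The restriction of $\psi$ yields an isomorphism between the hyperplane-section conics $H_{\beta}\cap I\cap\Lambda$ and $H_{\beta}\cap I\cap\Lambda'$ matching the six marked points $l_{\alpha}\cap\Lambda\leftrightarrow l_{\alpha}\cap\Lambda'$. Any such isomorphism between two plane sections of a smooth quadric, matching six marked points cut out by three lines from each ruling, extends uniquely to an automorphism of the quadric preserving the rulings; since this automorphism fixes all six lines it is the identity, so the two conics coincide, contradicting the choice of $\beta$. You have isolated the right obstacle but still need to supply this (or an equivalent) argument to complete the proof.
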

\begin{proof} Due to~\cite[Corollary 3.5]{CKS1} the automorphism group of the variety $Z$ is isomorphic to $S_{6}\times C_{2}$ where the group $S_{6}$ acts by permutations of coordinates $x_{i}$ and $C_{2}$ acts by the change of sign of the coordinate $y$. The variety $X$ is a double cover of $\PP^{3}$ branched in a hyperplane section of the Igusa quartic. Let us denote such a hyperplane by $H$. Since the double cover $X\to H$ is given by the linear system $|-\frac{1}{2}K_{X}|$, we have a homomorphism of groups $\Aut(X)\to \Aut_{lin}(I\cap H)$ where $\Aut_{lin}(I\cap H)$ is the group of linear transformations of $H$ which preserve the quartic $I\cap H$. Obviously this homomorphism is surjective and its kernel is generated by the Galois involution. Thus we need to prove that $\Aut_{lin}(I\cap H)$ coincides with the stabilizer of the hyperplane $H$ in the group $\Aut(I)\simeq S_{6}$.

 Intersections of lines $l_{\alpha}$ and hyperplanes $H_{\beta}$ with the subspace $H$ form a $(15_{4}, 10_{6})$-configuration of singular points of $I\cap H$ and planes which intersect $I\cap H$ in a double conic. Since the singular points are in a general enough position, the natural map from $\Aut_{lin}(I\cap H)$ to an automorphism group of $(15_{4}, 10_{6})$-configuration is an embedding. Since we have a natural isomorphism between the automorphism group of the $(15_{4}, 10_{6})$-configuration and the group $\Aut(I)$, we obtain a natural embedding $\Aut_{lin}(I\cap H)\hookrightarrow \Aut(I)$.

Suppose that the image of some element of the group $\Aut_{lin}(I\cap H)$ in $\Aut(I)$ does not preserve the hyperplane $H$. Then this automorphism induces a linear map from $H$ to another hyperplane $H'$ such that the point $l_{\alpha}\cap H$ maps to $\l_{\alpha}\cap H'$ for every $\alpha$. Since hyperplanes $H$ and~$H'$ don't coincide, one has $H\cap H_{\beta}\neq H'\cap H_{\beta}$ for some index $\beta$. Note that $H_{\beta}\cap I$ is a smooth quadric surface. One can easily check that among six lines $l_{\alpha}$ lying on the quadric $H_{\beta}\cap I$ three lines lie in one family while another three lines lie in another family. One can easily see that every isomorphism between two conics with six marked points which are hyperplane sections of a quadric can be uniquely extended to an automorphism of the quardic which preserves families of lines. We apply this statement to the case of the map $H\cap H_{\beta}\cap I\to H'\cap H_{\beta}\cap I$. But if an automorphism of $H_{\beta}\cap I$ preserves all lines $l_{\alpha}\subset H_{\beta}$ then such an automorphism is trivial, so~$H\cap H_{\beta}\cap I = H'\cap H_{\beta}\cap I$, which contradicts our assumptions.

This contradiction shows that the image of the embedding $\Aut_{lin}(I\cap H)\hookrightarrow \Aut(I)$ is contained in the stabilizer of the corresponding hyperplane $\Stab(H)\subset \Aut(I)$. Obviously, inverse statement also holds, so $\Aut_{lin}(I\cap H)= \Stab(H)$ and $\Aut(X)=\Stab(\bar{H})$.
\end{proof}

As a consequence of Proposition~\ref{pr2} and Proposition~\ref{pr3} and Corollary~\ref{cor3} we get Theorem~\ref{th57}.

\section{Equivariant birational rigidity of quartic double solids with 15 nodes}
\begin{definition} Let $X$ and $Y$ be a varieties with an action of a finite group $G$. We call a rational map $f:X\dasharrow Y$ a \emph{$G$-equivariant map} if there exist an automorphism $\tau$ of the group~$G$ such that the following diagram commutes for every $g\in G$:
$$\xymatrix{
X \ar@{-->}[r]^{f}\ar[d]^{g}& Y \ar[d]^{\tau(g)}
\\
X \ar@{-->}[r]^{f}& Y
}$$
We denote the group of $G$-equivariant automorphisms of a $G$-variety $X$ by $\Aut^{G}(X)$ and the group of $G$-equivariant birational selfmaps of a $G$-variety $X$ by $\operatorname{Bir}^{G}(X)$.
\end{definition}
\begin{definition} A $G$-Fano variety $X$ is called \emph{$G$-birationally rigid} if there is no $G$-Mori fibrati\-on~$X'\to Y$ such that varieties $X$ and $X'$ are $G$-birationally equivalent but not isomorphic. If one also has $\operatorname{Bir}^{G}(X)=\Aut^{G}(X)$ then $X$ is called \emph{$G$-birationally superrigid}.
\end{definition}

As an application of Theorem~\ref{th57} we classify all $G$-birationally rigid del Pezzo threefolds of degree 2 with 15 nodes.
\begin{lemma}\label{le1} Let $X=Z\cap H$ be a quartic double solid with $15$ nodes where $Z\subset\PP(2, 1, 1, 1, 1, 1, 1)$ is a Coble fourfold. Assume that $X$ is $\Aut(X)$-birationally rigid. Then the equation of $H$ is of the form $x_{i}+ax_{j}=0$ where $a\neq -1$. The group $\Aut(X)$ in this case is isomorphic to
\begin{itemize}
\item $S_{5}\times C_{2}$ if $a=0$;
\item $S_{4}\times C_{2}\times C_{2}$ if $a=1$;
\item $S_{4}\times C_{2}$ in other cases.
\end{itemize}
\end{lemma}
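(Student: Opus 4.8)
The plan is to translate $\Aut(X)$-birational rigidity into a condition on a permutation action of a stabilizer in $S_6$, and then to rule out every non-exceptional hyperplane by exhibiting an explicit equivariant link. I would start from Proposition~\ref{pr3}: writing $X=Z\cap H$ one has $\Aut(X)=\Stab_{\Aut(Z)}(H)$, and since the central factor $C_2\subset\Aut(Z)\simeq S_6\times C_2$ acts only on the coordinate $z$, it fixes every hyperplane in the variables $x_1,\dots,x_6$. Hence $\Aut(X)=G_0\times C_2$, where $G_0\subset S_6$ is the stabilizer of the linear form $\ell=\sum_i c_i x_i$ cutting out $H$, regarded up to scaling and modulo $s_1=\sum_i x_i$. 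A point I would record at once is that the two hypotheses occurring in Theorem~\ref{th2} are automatic here: the Galois involution generating $C_2$ fixes each of the $15$ nodes but exchanges the two small-resolution rulings over each of them, so it acts by $-1$ on the rank-five defect space $\operatorname{Cl}(X)_{\QQ}/\Pic(X)_{\QQ}$ and fixes $-K_X$, which spans $\Pic(X)_{\QQ}$. Consequently $\operatorname{Cl}(X)^{\Aut(X)}_{\QQ}=\QQ\langle -K_X\rangle$, so $X$ is $\Aut(X)\QQ$-factorial with $\operatorname{rk}\Pic(X)^{\Aut(X)}=1$ for every $H$. The content of the lemma is therefore genuinely birational rather than lattice-theoretic.

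Next I would classify the forms $\ell$ by the multiplicities of their coefficients $c_i$, since $G_0$ depends only on this partition of $6$ together with the symmetries coming from rescaling and from the shift $c_i\mapsto c_i+t$. Call $\ell$ \emph{exceptional} if after such a shift it has at most two nonzero coefficients, equivalently if some coefficient value occurs with multiplicity at least $4$; this is exactly the condition that $\ell=x_i+ax_j$. A direct computation of $G_0$ then yields the three groups in the statement: multiplicity pattern $5+1$ gives $G_0=S_5$ (so $a=0$ and $\Aut(X)\simeq S_5\times C_2$), pattern $4+2$ gives $G_0=S_4\times S_2$ (so $a=1$ and $\Aut(X)\simeq S_4\times C_2\times C_2$), and pattern $4+1+1$ gives $G_0=S_4$ (generic $a$, so $\Aut(X)\simeq S_4\times C_2$). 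For the excluded value I would compute when a singular line $l_\alpha$ of $I$ lies on $H$: evaluating $\ell$ along $l_\alpha$ shows this happens precisely when the three pairwise coefficient-sums determined by the pairing $\alpha$ coincide, and for $\ell=x_i+ax_j$ this occurs only for the pairing containing $\{i,j\}$ and only when $a=-1$. This both forces $a\neq-1$ and confirms that the exceptional $H$ really do give varieties with exactly $15$ nodes.

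The heart of the argument is to show that every non-exceptional valid $\ell$ yields a non-rigid $X$, and here I would use the following combinatorial dichotomy, proved by inspecting the remaining patterns $3+2+1$, $3+1+1+1$, $2+2+2$, $2+2+1+1$, $2+1+1+1+1$, $1+1+1+1+1+1$ (the pattern $3+3$ being excluded, as it is the double-quartic hyperplane $H_\beta$ and contains a singular line): $\ell$ is exceptional if and only if $G_0$ fixes \emph{neither} one of the $15$ lines $l_\alpha$ \emph{nor} one of the $10$ hyperplanes $H_\beta$. For each non-exceptional pattern one checks directly that $G_0$ is contained in some $\Stab(l_\alpha)$ or in some $\Stab(H_\beta)$, and I would then build a link in each case. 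When $G_0\subset\Stab(l_\alpha)$ the corresponding node of $X$ is $\Aut(X)$-invariant, and I would project from it. When $G_0\subset\Stab(H_\beta)$ I would interpret the partition $\beta$ as the one underlying the $\PP^2\times\PP^2$-structure of Theorem~\ref{th1} --- the two triples being the two factors --- so that $G_0$ preserves, up to the factor-swap, the two conic-bundle projections to $\PP^2$ inherited from that structure. In either case the outcome is an $\Aut(X)$-equivariant birational map from $X$ to a $G$-Mori fibration not isomorphic to $X$, contradicting $\Aut(X)$-birational rigidity.

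The main obstacle is precisely this last step. Because the lattice conditions are vacuous, one cannot avoid producing the links concretely: one must identify the target Mori fibre space of the projection from a fixed node and of the $H_\beta$-fibration, verify that the entire link is $\Aut(X)$-equivariant, and check that its target is genuinely not isomorphic to $X$. Carrying this out uniformly over all non-exceptional patterns is where the real work lies; by contrast the stabilizer computations and the reduction in the first two paragraphs are routine.
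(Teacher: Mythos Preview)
Your setup matches the paper's: $\Aut(X)=G_0\times C_2$ with $G_0\subset S_6$ the stabiliser of $\ell$, the Galois $C_2$ forces $\Aut(X)\QQ$-factoriality and minimality, the three exceptional stabilisers and the exclusion $a\neq -1$ are obtained in the same way. The divergence is in how the non-exceptional $\ell$ are eliminated, and here your $H_\beta$-mechanism has a gap.

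The paper argues by representation theory alone. Writing $H=\PP(V)$ for a four-dimensional $G_0$-subrepresentation $V\subset W$, the key step is: whenever $V$ contains a two-dimensional $G_0$-subrepresentation (irreducible or not), its projectivisation is a $G_0$-invariant line $L\subset\PP^3$, and projection from $L$ yields an $\Aut(X)$-equivariant fibration in rational surfaces over~$\PP^1$; after an equivariant resolution and relative MMP this contradicts rigidity. Together with the fixed-node $\Rightarrow$ conic-bundle observation (which you also use), the lemma reduces to a character computation: one lists the possible non-abelian stabilisers $S_5$, $S_4\times C_2$, $S_4$, $S_3\times S_3$, $S_3\times C_3$, $S_3\times C_2$, $S_3$ and checks for which of them $W$ decomposes as $4+1$ or $3+1+1$. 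Only the stabilisers of $x_i+ax_j=0$ survive, and no Sarkisov link beyond ``project from a line'' or ``project from a node'' is ever needed.

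Your alternative replaces ``$V$ has a two-dimensional subrepresentation'' by ``$G_0$ fixes some $H_\beta$''. The dichotomy is extensionally correct, but the link you propose for the $H_\beta$ case is not justified. The $\PP^2\times\PP^2$ small resolutions of $Z$ described in~\cite{CKS1} are not indexed by the ten $(3,3)$-partitions $\beta$: their $S_6$-stabiliser is a \emph{transitive} copy of $S_5$, whereas $\Stab(H_\beta)\simeq(S_3\times S_3)\rtimes C_2$ is intransitive (and of order $72\nmid 120$), so ``the two triples of $\beta$ are the two $\PP^2$-factors'' is a mis-identification. In particular, for the patterns $3{+}2{+}1$ and $3{+}1{+}1{+}1$ --- the only non-exceptional ones with no fixed $l_\alpha$ --- you are left without a construction. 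The paper's invariant-line projection handles these at once: in both cases $V$ contains the two-dimensional standard representation of $S_3$.
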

\begin{proof} Since $\Aut(X)$ always contains the Galois involution of the double cover, the variety~$X$ is always $\Aut(X)\QQ$-factorial (i.e. every $\Aut(X)$-invariant divisor is a $\QQ$-Cartier divisor) and $\Aut(X)$-minimal (i.e. the rank of the invariant Picard group equals to $1$). We know that $\Aut(X)\simeq G\times C_{2}$, where $G$ is a subgroup of $\Aut(I)\simeq S_{6}$. We consider the projective space~$\PP^{4}\supset I$ as a projectivization of the simplicial representation $W$ of the group $S_{6}$. Consequently, $H$ is a projectivization of a four-dimensional subrepresentation $V\subset W$ of the group~$G$. If $V$ contains a two-dimensional subrepresentation of $G$ then its projectivization is a $G$-invariant line on $\PP(V)$. Projection from this line gives us a structure of $\Aut(X)$-equivariant fibration by rational surfaces. We can apply an $\Aut(X)$-equivariant resolution of singularities and $\Aut(X)$-equivariant relative minimal model program and obtain a $G$-Mori fiber space with the base of positive dimension which is birational to our quartic double solid, a contradiction. Hence the representation $W$ of the group $G$ is a direct sum either of a one-dimensional and a four-dimensional irreducible representations or of two one-dimensional and a three-dimensional irreducible representations. Also we know that there are no $\Aut(X)$-invariant singular points on~$X$ (otherwise projection from such a point gives us a structure of an $\Aut(X)$-equivariant conic bundle).

We have only the following non-abelian subgroups of $S_{6}$ which are stabilizers of hyperplanes in $\PP(W)$: the group $S_{5}$ (for the hyperplane $x_{1}=0$), the group $S_{4}\times C_{2}$ (for the hyperplane~$x_{1}\pm x_{2}=0$), the group $S_{4}$ (for the hyperplane $x_{1}+ax_{2}=0$), the group $S_{3}\times S_{3}$ (for the hyperplane $x_{1}+x_{2}+x_{3}=0$), the group $S_{3}\times C_{3}$ (for the hyperplane $x_{1}+\xi x_{2}+\xi^{2}x_{3}=0$, where $\xi^{3}=1$), the group $S_{3}\times C_{2}$ (for the hyperplane $x_{1}+x_{2}+ax_{3}=0$), and the group $S_{3}$ (for the hyperplane $x_{1}+ax_{2}+bx_{3}=0$). For every subgroup one can calculate the character of the representation and decompose the representation $W$ in irreducible summands. It turns out that only stabilizers of hyperplanes $\{x_{i}+ax_{j}=0\}$ satisfy the properties mentioned above.

If $a=-1$ then some lines $l_{\alpha}$ lie on $X$, which is impossible since $X$ has isolated singularities.
\end{proof}

\begin{propos}\label{pr58} In the notation of the previous proposition, assume that $a\neq 0$. Then $X$ is not $\Aut(X)$-birationally rigid.
\end{propos}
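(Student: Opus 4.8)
The plan is to exhibit an explicit $\Aut(X)$-equivariant Sarkisov link to a Mori fibration over a positive-dimensional base, which immediately contradicts rigidity. The whole construction is driven by the position of the nodes, so the first step is to describe how the $15$ nodes of $X$ are distributed under the action of $G=\Stab(H)$. Writing $H=\{x_i+ax_j=0\}$ with, say, $i=1$, $j=2$, the group $G$ contains $S_4$ permuting $x_3,\dots,x_6$, and the nodes, being the points $l_\alpha\cap H$, split into two orbits: a large orbit of $12$ nodes (those $l_\alpha$ for which $1$ and $2$ are separated by the pairing) and a distinguished orbit of exactly three nodes $P_1,P_2,P_3$, corresponding to the three partitions $(12)(34)(56)$, $(12)(35)(46)$, $(12)(36)(45)$. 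A direct computation shows $P_1,P_2,P_3$ all lie in the $G$-invariant plane $\Pi=\{x_1=x_2=0\}$ and span it. It is precisely this three-node orbit that is responsible for non-rigidity, and it is exactly what disappears when $a=0$: in that case $G=S_5$ acts transitively on all $15$ nodes, so no such invariant sub-configuration exists. Hence the hypothesis $a\neq 0$ enters through the mere existence of the orbit $\{P_1,P_2,P_3\}$.

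Next I would produce the link. The preimage of $\Pi$ under the double cover $X\to H\cong\PP^3$ is a $G$-invariant divisor $D\in|-\tfrac12 K_X|$, namely a del Pezzo surface of degree $2$ whose only singularities are the three nodes $P_1,P_2,P_3$. Projection of $X$ from a single node is a conic bundle (as already noted in the proof of Lemma~\ref{le1}); here the three nodes form one orbit, so the three conic-bundle structures are permuted by $G$ and none is individually invariant. To obtain a genuinely $\Aut(X)$-equivariant fibration I would instead pass to the invariant Picard rank $2$ situation: take the $G$-equivariant small resolution (or Kawamata blow-up) of the orbit $\{P_1,P_2,P_3\}$, which raises $\operatorname{rk}\Pic(\,\cdot\,)^{G}$ from $1$ to $2$, and then run the $G$-equivariant minimal model program. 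One extremal contraction is the resolution morphism back to $X$; the plan is to show that the second $G$-extremal ray yields, after the program terminates, a $G$-Mori fibration $X'\to Y$ with $\dim Y>0$. The coplanarity of the three centres — equivalently, the presence of the invariant surface $D$ — is what should force this second output to be a fibration (a conic bundle or a del Pezzo fibration) rather than another Fano of Picard rank $1$. It may be cleanest to carry this out in the $\PP^2\times\PP^2$ model of Theorem~\ref{th1}, where the two projections give two $\PP^1$-fibrations of $H$ and the three special nodes correspond to three of the fifteen contracted curves; an elementary transformation along these curves should turn a projection into an honest $\Aut(X)$-equivariant fibration.

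Once such an $X'\to Y$ is in hand, the conclusion is immediate: $X'\to Y$ is a $G$-Mori fibration which is $G$-birational to $X$ but, having a base of positive dimension, is not isomorphic to the Fano $X$ (for which $\operatorname{rk}\Pic(X)^{G}=1$). By definition this means $X$ is not $\Aut(X)$-birationally rigid. The main obstacle is the construction and bookkeeping of the link itself: one must check $G$-equivariance at every step, and in particular reconcile it with the Galois involution contained in $\Aut(X)=G\times C_2$, which swaps the two natural $\PP^1$-fibration structures coming from the two $\PP^2$-factors, so that the link cannot simply be one of those projections; one must also verify that the $G$-MMP terminates at a fibration rather than returning to $X$, and keep track of the remaining twelve nodes and of $G\QQ$-factoriality throughout. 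Verifying that the output is a bona fide Mori fibration (terminal singularities and the correct relative Picard rank) is the step I expect to require the most care.
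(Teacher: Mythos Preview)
Your starting point is exactly the paper's: for $a\neq 0$ there is a $G$-orbit of precisely three nodes $p_1,p_2,p_3$, and one initiates a Sarkisov link by the $G$-equivariant blow-up $\tau:\widetilde X\to X$ of this orbit. From here, however, the paper and your plan diverge in an essential way.

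You aim to show that the link terminates in a Mori fibration over a positive-dimensional base, citing the coplanarity of the three nodes as the reason this ``should'' happen. The paper does \emph{not} prove this, and does not need to. To contradict $G$-birational rigidity it suffices to show that the link does not come back to $X$. The paper carries this out by a short lattice computation in $\Pic(\widetilde X)^G$: after the blow-up one checks that $-K_{\widetilde X}$ is nef and trivial only on the six curves over the lines $l_{ij}$, flops them to reach $\widehat X$, and then assumes for contradiction that the second $G$-extremal contraction $\xi:\widehat X\to X$ is again a divisorial contraction to $X$. Writing the two natural bases $\{\widetilde H,\widetilde E\}$ and $\{\widehat H',\widehat E'\}$ of $\Pic(\widetilde X)^G$ and using $-K_{\widetilde X}\sim 2\widetilde H-\widetilde E\sim 2\widehat H'-\widehat E'$ together with $\widetilde H^3=2$, $\widetilde E^3=6$, one gets the integer system $a+2b=\pm1$, $2a^2-3b^2=2$, whose only solution is $(a,b)=(1,0)$, i.e.\ the identity --- contradiction. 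Hence the link leads to a different $G$-Mori fibre space, and $X$ is not $G$-birationally rigid.

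The step you yourself flag as ``the step I expect to require the most care'' --- excluding that the $G$-MMP returns to $X$ --- is in fact the entire content of the proof, and your proposal supplies no mechanism for it. The hope that coplanarity of the centres forces a fibration is not substantiated (and the paper neither asserts nor needs it). Two smaller points: the paper uses the ordinary blow-up of the three nodes (not a small resolution, and ``Kawamata blow-up'' is something else), and there is an intermediate flop in six curves before the second contraction, which your outline omits. The $\PP^2\times\PP^2$ picture you sketch is suggestive but, as you note, the Galois involution swaps the two projections, so neither gives a $G$-equivariant fibration directly; the paper avoids this difficulty entirely by the numerical argument above.
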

\begin{proof} Denote $\Aut(X)$ by $G$ for simplicity. Without loss of generality we may assume that $X$ is given by the equation $x_{0}+ax_{1}=0$ in the Coble fourfold $Z$. If $a\neq 0$ then we have a $G$-invariant set which consists of three singular points $$p_{1}=(0:0:0:1:1:-1:-1),\ p_{2}=(0:0:0:1:-1:1:-1),\ p_{3}=(0:0:0:1:-1:-1:1)$$ in coordinates $(y:x_{1}:x_{2}:x_{3}:x_{4}:x_{5}:x_{6})$. Denote by $\pi:X\to \PP^{3}$ the morphism given by the linear system $|-\frac{1}{2}K_{X}|$. Let $l_{ij}$ be a line passing through $\pi(p_{i})$ and $\pi(p_{j})$. The preimage of $l_{ij}$ under $\pi$ consists of two curves which we denote by $l_{ij}'$ and $l_{ij}''$. Let $\widetilde{X}$ be a blow up of three singular points $p_{i}$. One can easily check that the divisor $-K_{\widetilde{X}}$ is nef and the only curves with zero intersection with it are six proper transforms of curves $l_{ij}'$ and $l_{ij}''$. Let $\widehat{X}$ be a variety which we obtain after making flops in such curves. The $G$-invariant Mori cone of the variety $\widehat{X}$ generated by two rays and one of them is $K_{\widehat{X}}$-negative. We need to prove that its contraction is not a divisorial contraction to $X$.

Suppose that we have the following commutative $G$-equivariant diagram:

$$\xymatrix{
 &  \widetilde{X}\ar@{-->}[r]\ar[ld]_{\tau} & \widehat{X}\ar[rd]^{\xi} &
\\
X & & & X
}$$
 where $\tau$ is a blow up of points $p_{1}, p_{2}$ and $p_{3}$ and $\xi$ is a contraction of a negative extremal ray. Let $H$ be an ample generator of the group $\Pic(X)^{G}$, let $\widetilde{H}=\tau^{*}H$ and let $\widetilde{E}$ be an exceptional divisor of $\tau$. Analogously we can define $\widehat{H}=\xi^{*}H$ and let $\widehat{E}$ be an exceptional divisor of $\xi$. Let $\widehat{H}'$ and $\widehat{E}'$ be the proper transforms of $\widehat{H}$ and $\widehat{E}$ on $\widetilde{X}$ respectively. The group $\Pic(\widetilde{X})^{G}$ is generated by $\widetilde{H}$ and $\widetilde{E}$ and we have following equalities:

$$\widetilde{H}^{3}=2,\ \widetilde{H}^{2}\cdot \widetilde{E}=\widetilde{H}\cdot \widetilde{E}^{2}=0,\ \widetilde{E}^{3}=6.$$

 Let $\widehat{H}'=a\widetilde{H}+b\widetilde{E}$ and $\widehat{E}'=c\widetilde{H}+d\widetilde{E}$. From equalities $$2\widetilde{H}-\widetilde{E}\sim -K_{\widetilde{X}}\sim 2\widehat{H}'-\widehat{E}'$$ we deduce that $c=2a-2$ and $d=1+2b$. Since the classes $\widehat{H}'$ and $\widehat{E}'$ generate the group~$\Pic(\widetilde{X})^{G}$ too, the determinant of the corresponding matrix is equal to $\pm 1$, so we have an equality $a+2b=\pm 1$. Also one can easily see that $$2=\widehat{H}^{3}=\frac{1}{2}\widehat{H}^{2}\cdot (-K_{\widehat{X}})=\frac{1}{2}(\widehat{H}')^{2}\cdot (-K_{\widetilde{X}})=2a^2-3b^2,$$ where the third equality follows from the fact that the anticanonical class $-K_{\widetilde{X}}$ is base point free, so we can assume that it does not intersect with flopped curves. The only solution of the system of equations $2=2a^2-3b^2$ and $a+2b=\pm 1$ in integers is $a=1, b=0$, but this contradicts our assumptions. Hence, the Sarkisov link which starts with the blow up of $X$ in three points $p_{i}$ gives us a birational transform of $X$ to another $G$-Mori fiber space, thus the variety $X$ is not $G$-birationally rigid.
\end{proof}
\begin{definition} In the notation of Lemma~\ref{le1} let $a=0$. A subgroup $S_{5}\subset \Aut(X)$ is a \emph{twisted subgroup}, if every permutation $\sigma\in S_{5}$ acts as $$(y:x_{1}:x_{2}:x_{3}:x_{4}:x_{5}:x_{6})\mapsto (\operatorname{sign}(\sigma)y:x_{\sigma(1)}:x_{\sigma(2)}:x_{\sigma(3)}:x_{\sigma(4)}:x_{\sigma(5)}:x_{6}).$$
\end{definition}
\begin{propos}\label{pr57} In the notation of Lemma~\ref{le1} let $a=0$. Let $G$ be a subgroup of $$\Aut(X)\simeq S_{5}\times C_{2}$$ such that $X$ is a $G\QQ$-factorial and $G$-minimal variety. Then the variety $X$ is $G$-birationally superrigid if and only if $G$ coincides with one of the following groups: $\Aut(X)$, twisted subgroup~$S_{5}$ or $A_{5}\times C_{2}$. Moreover, in this case the variety $X$ is $G$-birationally superrigid.
\end{propos}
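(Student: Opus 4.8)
The plan is to apply the equivariant Noether--Fano--Iskovskikh method in the form of Corti's factorisation of Sarkisov links. Since $X$ is $G$-minimal we have $\Pic(X)^{G}=\mathbb{Z}H$, where $H$ is the ample generator with $-K_{X}=2H$ and $H^{3}=2$. I would first reduce the statement to a local problem: if $X$ were not $G$-birationally superrigid, then there would be a $G$-invariant mobile linear system $\mathcal{M}\subset|{-nK_{X}}|=|2nH|$ for some $n>0$ such that the log pair $(X,\tfrac{1}{n}\mathcal{M})$ is not canonical. Because $\mathcal{M}$ is $G$-invariant, the reduced union of its centres of non-canonicity is a $G$-invariant closed subset, so it suffices to exclude every $G$-fixed point, every $G$-orbit of points, and every $G$-invariant curve as such a centre. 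For general members $M_{1},M_{2}\in\mathcal{M}$ the cycle $\mathcal{M}^{2}=M_{1}\cdot M_{2}$ has class $4n^{2}H^{2}$, whence $\mathcal{M}^{2}\cdot H=8n^{2}$; this is the ``budget'' driving every numerical exclusion below.

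Next I would dispose of the smooth centres by multiplicity inequalities. If a smooth point $p$ is a centre of non-canonicity, the $4n^{2}$-inequality gives $\operatorname{mult}_{p}(\mathcal{M}^{2})>4n^{2}$; summing over a $G$-orbit $\Sigma$ of smooth points and comparing with the budget yields $8n^{2}\ge\sum_{p\in\Sigma}\operatorname{mult}_{p}(\mathcal{M}^{2})>4n^{2}|\Sigma|$, so $|\Sigma|=1$ and a smooth centre must be a genuine $G$-fixed point. If a $G$-invariant curve $C$ (or $G$-orbit of curves) is a centre then $\operatorname{mult}_{C}\mathcal{M}>n$, and $8n^{2}=\mathcal{M}^{2}\cdot H\ge(\operatorname{mult}_{C}\mathcal{M})^{2}(C\cdot H)>n^{2}\deg_{H}C$ forces $\deg_{H}C<8$. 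For the fifteen nodes I would use the ordinary-double-point version of the inequality, which bounds $\operatorname{mult}_{p}(\mathcal{M}^{2})$ below by a positive multiple of $n^{2}$ at any nodal centre; since for each of the three groups the nodes form a single orbit of size $15$ (the stabiliser of a node in $A_{5}$ is the Klein four-group, and in $S_{5}$ and the twisted $S_{5}$ it is $D_{8}$), the budget $8n^{2}$ is far too small and the nodes are excluded outright.

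It then remains, for each of $S_{5}\times C_{2}$, $A_{5}\times C_{2}$ and the twisted $S_{5}$, to check that there is no $G$-fixed smooth point and no $G$-invariant curve (or orbit) of $H$-degree at most $7$. The fixed-point analysis I would carry out representation-theoretically: fixed points of $G$ on $X\subset\PP(2,1,1,1,1,1)$ lie over fixed points of $G$ on $\PP^{4}=\PP(W)$, and decomposing the restriction of the permutation representation $W$ shows that none of the three groups has an isolated invariant point on $X$ off the nodes. For the curves I would enumerate the low-degree $G$-invariant one-cycles --- images of lines and conics of the double cover together with their orbits --- and exclude each, either because it cannot lie in the base locus of a system with $\operatorname{mult}_{C}\mathcal{M}>n$, or by intersecting $\mathcal{M}$ with the ramification surface and with $H$ to violate the degree bound. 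Once all centres are excluded the pair $(X,\tfrac1n\mathcal{M})$ is canonical for every such $\mathcal{M}$, which yields simultaneously the non-existence of a link to another Mori fibre space and the equality $\operatorname{Bir}^{G}(X)=\Aut^{G}(X)$; hence $X$ is $G$-birationally superrigid for these three groups.

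For the converse I would show that every remaining $G$ satisfying the minimality hypothesis admits a $G$-equivariant Sarkisov link to a different Mori fibre space. The minimal subgroups can be listed among those of $S_{5}\times C_{2}$ with $\operatorname{rk}\Pic(X)^{G}=1$, and for each non-exceptional one I would exhibit the relevant structure: a $G$-invariant smooth point or line gives, by projection, a conic bundle or a rational-surface fibration as in Lemma~\ref{le1}, while a $G$-invariant orbit of three nodes gives, by blowing up and flopping, a link to a distinct Mori fibre space exactly as in Proposition~\ref{pr58}. I expect the main obstacle to be the curve step of the superrigid direction: producing a complete list of $G$-invariant curves of $H$-degree below the budget on a fifteen-nodal threefold, and bounding their multiplicities in $\mathcal{M}$, is markedly more delicate than the point and node cases, and is where the special geometry of the Igusa quartic --- the configuration of the fifteen lines $l_{\alpha}$ and ten hyperplanes $H_{\beta}$ --- must be used in an essential way.
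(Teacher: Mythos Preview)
Your positive direction (superrigidity for the three groups) sketches the Noether--Fano exclusion that is carried out in \cite{CPS1}; the paper does not reprove this but simply quotes that reference for the canonicity of $(X,\tfrac{1}{\mu}\mathcal{M})$. So on that side you are doing more work than the paper, and your outline is sound, though as you acknowledge the curve step is left open and is exactly the part that requires the detailed arguments of \cite{CPS1}.

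The converse direction, however, has two genuine gaps. First, you never say how to decide which subgroups $G\subset S_{5}\times C_{2}$ actually satisfy $\operatorname{rk}\Pic(X)^{G}=1$. The paper handles this via Prokhorov's embedding $\Aut(X)\hookrightarrow\Aut(D_{5})\simeq C_{2}^{5}\rtimes S_{5}$: one checks invariants in the $D_{5}$ lattice and finds that $A_{5}$ and the \emph{standard} $S_{5}$ have a nontrivial invariant vector, so they fail the hypothesis outright. Without this, your plan to ``list the minimal subgroups'' is not executable, and you would be trying to build Sarkisov links for groups to which the statement does not even apply.

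Second, your repertoire of links is incomplete. The paper organises the remaining cases by the maximal subgroups of $S_{5}$: if the image of $G$ in $S_{5}$ lies in $S_{4}$ one uses the three-node link of Proposition~\ref{pr58}; if it lies in $S_{3}\times C_{2}\simeq D_{12}$ representation theory already rules it out (no irreducible summand of the right dimension, as in Lemma~\ref{le1}); but if it lies in the Frobenius group $C_{5}\rtimes C_{4}$ there is neither a $G$-fixed point nor a $G$-invariant line in $\PP^{3}$ (the four-dimensional representation stays irreducible), and there is no $G$-orbit of three nodes. The paper instead uses a $G$-invariant orbit of five nodes whose images in $\PP^{3}$ are in general position, and the pencil of twisted cubics through them gives a $G$-equivariant conic bundle. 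This construction is missing from your proposal and is essential to complete the non-rigidity half of the statement.
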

\begin{proof} If the group $G$ is contained in $S_{4}\times C_{2}$ then the variety $X$ is not $G$-birationally rigid since we have the same link as in Proposition~\ref{pr58} and the proof works in our situations without changes. If the group $G$ is contained in $(C_{5}\rtimes C_{4})\times C_{2}$ then we have a $G$-invariant set of singular points of $X$. One can easily check that their images on $\PP^{3}$ are in general position and twisted cubics passing through them give us a structure of $G$-conic bundle on $X$. Indeed, for a general point of $\PP^{3}$ we have exactly one twisted cubic as above passing through this point and general twisted cubic intersects the variety $X$ in 5 double points and two additional points, so its preimage on $X$ is an irreducible curve of genus 0. Also we know, that the group $G$ cannot be a subgroup of $D_{12}\times C_{2}$, because such groups have only one- and two-dimensional irreducible representations while the representation $W$ of the group $G$ is a direct sum either of a one-dimensional and a four-dimensional irreducible representations or of two one-dimensional and a three-dimensional irreducible representations (see the proof of Lemma~\ref{le1}). Hence $G$ must be one of the following groups: $A_{5}$, $A_{5}\times C_{2}$, $S_{5}$ (two non-conjugate subgroups) or $S_{5}\times C_{2}$.

Due to~\cite[Corollary 8.2.2]{Pro1} there is a natural embedding of the group $\Aut(X)$ into the automorphism group of the root system $D_{5}$ and only $G$-invariant vector in the corresponding lattice is the null vector (otherwise variety $X$ is not $G\QQ$-factorial or $G$-minimal). The automorphism group of the root system $D_{5}$ is isomorphic to $C_{2}^{5}\rtimes S_{5}$ and acts on the corresponding lattice $\mathbb{Z}^{5}$ by changes of signs and permutations of coordinates. The group $A_{5}$ and standard subgroup $S_{5}\subset \Aut(X)$ have non-trivial invariant vector in the lattice, so $G$ can not coincide with them. One can easily see, that all other subgroups satisfy the required property, and equivariant birational rigidity with respect to them were proven in~\cite[Theorem 4.2]{CPS1}. Note, that there is a mistake in this paper, more precisely, the group $A_{5}$ is not minimal, as we saw before. The pair $(X, \frac{1}{\mu}\mathcal{M})$ is canonical for every $\mu$ and every movable $G$-invariant linear subsystem~\mbox{$\mathcal{M}\subset |-\mu K_{X}|$} (see the proof of the~\cite[Theorem 4.2]{CPS1}). So the variety $X$ is $G$-birationally superrigid by the Noether--Fano inequalities (see, for example,~\cite[Theorem 3.2.6]{ChS}).
\end{proof}

The proof of Theorem~\ref{th2} is a direct consequence of Lemma~\ref{le1}, Proposition~\ref{pr58} and Proposition~\ref{pr57}.

\end{document}